\def\L {{\rm I\kern-0.16em L}}
\newtheorem{thm}{Theorem}[section]
\newtheorem{lemma}[thm]{Lemma}
\newcommand{\Besov}[3]{B^{#2}_{#1,#3}}
\newcommand{\normL}{\left| \left|}
\newcommand{\normR}{\right| \right|}
\newcommand{\Ltwo}{{\it{L}_2}}
\newcommand{\Ltwop}{{\it {}^p{L}_{2}}}
\newcommand{\Lp}{{\it{L}_p}}
\newcommand{\Donetwo}{D^{1,2}}
\newcommand{\condEof}[2]{ \E \left( #1 | #2 \right) }
\newcommand{\E}{E}
\newcommand{\F}{\mathcal{F}}
\newcommand{\R}{\mathbbm{R}}
\newcommand{\FF}{\mathbbm{F}}
\newcommand{\Prob}{\mathbbm{P}}
\newcommand{\pDF}{{}^pDF}
\date{\today}
\begin{document}

\title{ Convergence rate for the hedging error of a path-dependent example}
\author{Dario Gasbarra} 
\address{
  University of Helsinki,  Department of Mathematics and Statistics\\
  P.O. Box 68, FI-00014 University of Helsinki \\
  Finland
}
\author{Anni Laitinen} %
\address{
  University of Jyv\"askyl\"a, Department of Mathematics and Statistics\\
  P.O. Box 35, FI-40014 University of Jyv\"askyl\"a\\
  Finland
}
\maketitle

\begin{abstract}
  We consider a Brownian functional \(F=g\bigl(\int_0^T \eta(s) dW_s\bigr)\)
with \(g \in \Ltwo(\gamma)\) and a singular deterministic \(\eta\).
We deduce the $\Ltwo$-convergence rate for the approximation
\(F^{(n)} = E F + \int_0^T \phi^{(n)}(s) dW_s\)
for a class of piecewise constant predictable integrands \(\phi^{(n)}\)
from the fractional smoothness of \(g\) quantified by Besov spaces
and the rate of singularity of \(\eta\).
\end{abstract}

\begin{small}
{\bf Keywords: }
Malliavin calculus, 
chaos decomposition,
fractional smoothness,
Besov space,
approximation error
\end{small}

\section{Introduction}

Let \((W_t)_{t\geq 0}\) be a standard Brownian motion on a complete probability space 
\((\Omega, \F, \Prob)\),
and \( \FF=(\F_t)_{t\geq 0}\) the filtration generated by \(W\) and augmented by the $\Prob$-null sets.
Suppose we have a square-integrable random variable \(F:\Omega \to \R\).
With a suitable choice of a process \((\phi_t)_{t\geq 0}\), we can represent 
\(F\) as
\[
  F= \E F + \int_0^T \phi_t dW_t ,
\]
provided that our \(F\) is \(\F_T\)-measurable.
This would correspond to the perfect hedging of the option \(F\), where \(W\) is the price 
process of the underlying, \(T>0\) the time horizon,  
and \(\phi\) the trading strategy. Using only finitely many trading times
\(0=t_0 < t_1 < \ldots < t_n=T\), denoted by \(\tau_n = (t_i)_{i=0}^n\),
means we get an error
\[
  C(F, \tau_n, \nu) := \int_0^T \phi_t dW_t - \sum_{i=1}^n \nu_{i-1} \left(W_{t_i} - W_{t_{i-1}} \right),
\]
where \(\nu_{i-1}\) is measurable w.r.t. \(\F_{t_{i-1}}\), that is,
\(\nu = (\nu_i)_{i=0}^{n-1} \) is our new hedging strategy, constant between the trading times.

The behaviour of this error as the number of trading times tends to infinity,
depending on the structure of \(F\), has been a topic of intensive research in recent years.

For \(F = f(W_T)\), that is, in the case of European options, where the pay-off depends only on
the stock price at the terminal time, the problem is fairly well understood. The convergence in \(\Ltwo\)
was considered in \cite{ZhangR}, \cite{GobetTemam}, \cite{Gei15}, \cite{GeissGeiss}  and \cite{GeissHujo},
also for non-equidistant time-nets \(\tau\) and more general price processes than \(W\), and 
convergence w.r.t.\ the \(BMO\) norm in \cite{GeissBMO}, employing the \(\Ltwo\) techniques as well.
The convergence with respect to a stronger \(\Lp\) norm for \(p>2\) was investigated in \cite{GeissToivola2}; 
here, missing orthogonality required fundamentally different methods.
For weak convergence, see e.g.\ 
\cite{GobetTemam}, \cite{GeissToivola1}, \cite{HayashiMykland} and \cite{Fukasawa}, to name a few.
The key concept to study the error behaviour is the fractional smoothness of \(f\), which 
links the problem to functional analysis and interpolation theory.
See \cite{GeissGobet}  and the references therein for a more complete overview.

Now, one would like to understand the behaviour of the error
\[
  C(F, \tau_n, \nu) = \int_0^T \phi_t dW_t - \sum_{i=1}^n \nu_{i-1} \left(W_{t_i} - W_{t_{i-1}} \right)
\]
when \(F\) is a square-integrable random variable, not necessarily of the form \(F=f(W_T)\).
A similar question has been considered in \cite{GeissGeissGobet}; there,
the case when the terminal condition \(F\) depends only on finitely many increments of the
price process was investigated in a sightly different setting with backward stochastic differential 
equations.
For a fully path dependent \(F\), the question seems to be open.

This work contributes to this aim by an example of the form  
\begin{equation} \label{eqn:ourF}
  F = g\biggl( \int_0^T \eta(t)dW_t\biggr),
\end{equation}
where \(\eta\) has a singularity \((T-t)^{-\alpha}\) for some \(\alpha \in (0,\frac{1}{2})\) 
at the end.

\section{Preliminaries and notation}

We use the notations \(\Ltwo\) 
for square integrable random variables
and \(\Donetwo \subset \Ltwo\) for random variables in \(\Ltwo\) with square integrable Malliavin derivatives;
for Malliavin calculus and general theory on Brownian motion, 
we refer to \cite{Nualart} and \cite{RevuzYor}. 
The notations \(\Ltwo (\gamma)\) and \(\Donetwo (\gamma)\) are used 
respectively for functions on the real line, with \(\gamma\) denoting the standard Gaussian measure.

For fractional smoothness, we use the index \(\theta \in (0,1)\), and the Besov spaces 
\begin{equation*}
  \Besov{2}{\theta}{q} (\gamma) = \left( \Ltwo (\gamma), \Donetwo (\gamma) \right)_{\theta, q} .
\end{equation*}
These are intermediate spaces between 
\(\Ltwo (\gamma)\) and \(\Donetwo (\gamma)\), obtained by the real interpolation 
(see e.g.\ \cite{B-S}%
), 
with the interpolation parameters \(0 < \theta < 1\) and \(1 \leq q \leq \infty\). 
These spaces have a lexicographical order: %
\[ 
  \Besov{2}{\theta_2}{q_1} (\gamma) \subset \Besov{2}{\theta_1}{q_2} (\gamma) 
\]
for any \(0 < \theta_1 < \theta_2 < 1\) and any \(1 \leq q_1 , q_2 \leq \infty\), and
\[ 
  \Besov{2}{\theta}{q_1} (\gamma) \subset \Besov{2}{\theta}{q_2} (\gamma) 
\]
for any \(0 < \theta < 1\) and any \(1 \leq q_1 \leq q_2 \leq \infty\).
Computing \(\theta\) is elementary for many functions,
see \cite{ToivolaVK} or \cite[Example 2.3]{GeissToivola2} for the standard examples.

\bigskip

Finally, if \(F \in \Donetwo\), 
we use the notation \(\pDF\) for the 
predictable projection of the Malliavin derivative \(DF\), 
and the same notation for the process obtained by 
extending the operator from \(\Donetwo\) to \(\Ltwo\)
(see Section \ref{sec:Appendix}
for details).
Then,
for \(0<t<T\), 
we have
\begin{equation*}
  \E(F | \F_b)- \E(F | \F_a) = \int_a^b (\pDF)_t dW_t
\end{equation*}
for \(0\leq a < b \leq T\).

\section{Results} \label{sec:Results}
For \(T>0\) 
and $\beta\in (0,1)$, we define the function \(\eta \colon [0,T) \to \R\) by setting
\begin{eqnarray*} 
  \eta(t) = (T-t)^{(\beta-1)/2} \; 
  T^{-\beta/2} \sqrt{\beta}.
\end{eqnarray*}
We observe that \(\eta\) has a
singularity at $t=T$, and  $\parallel \eta \parallel_{\Ltwo([0,T])}=1$.

We will consider the random variable 
\[
  F := g\biggl( \int_0^T \eta(t)dW_t\biggr)
\]
with $g\in \Ltwo(\gamma)$. 
Notice that we do not assume continuity of \(g\).

\begin{thm} \label{thm:example}
Let \(0 < \theta < 1\) and \(t_i := \tfrac{i}{n} T\).
If \(g \in \Besov{2}{\theta}{2} (\gamma)\), then
\[
  \normL F - \E F - \sum_{i=1}^n \nu_i \left( W_{t_i} - W_{t_{i-1}} \right) \normR_{\Ltwo} 
  \leq c_2 n^{-\frac{\beta \theta}{2}},
\] 
where
\(\nu_i = (t_i - t_{i-1})^{-1} \int_{t_{i-1}} ^{t_i} \condEof{(\pDF)_s}{\F_{t_{i-1}}} ds\).
\end{thm}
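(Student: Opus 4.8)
The plan is to turn the left-hand side into a deterministic weighted sum by making the martingale integrand explicit, and then to insert the fractional smoothness of $g$.

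First I would identify $(\pDF)$. Writing $I_t=\int_0^t\eta(s)\,dW_s$ and $\sigma_t^2=\int_t^T\eta(s)^2\,ds=(1-t/T)^\beta$, the law of $\int_0^T\eta\,dW$ conditionally on $\F_t$ is $N(I_t,\sigma_t^2)$, so $\E(F|\F_t)=G(I_t,\sigma_t^2)$, where $G(x,v)=\int_\R g(x+\sqrt v\,z)\,\gamma(dz)$ is the Gaussian heat extension of $g$ and solves $\partial_vG=\tfrac12\partial_x^2G$. Because $d\sigma_t^2=-\eta(t)^2\,dt=-d\langle I\rangle_t$, the drift terms in It\^o's formula cancel and I get $(\pDF)_t=\eta(t)\,\partial_xG(I_t,\sigma_t^2)$. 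Since $F-\E F=\int_0^T(\pDF)_t\,dW_t$ while $\sum_i\nu_i(W_{t_i}-W_{t_{i-1}})$ is the integral of the predictable step process with value $\nu_i$ on $(t_{i-1},t_i]$, the It\^o isometry gives
\[
  \normL F-\E F-\sum_{i=1}^n\nu_i(W_{t_i}-W_{t_{i-1}})\normR_{\Ltwo}^2
  =\sum_{i=1}^n\E\int_{t_{i-1}}^{t_i}\bigl|(\pDF)_s-\nu_i\bigr|^2\,ds ,
\]
in which $\nu_i$ is precisely the minimiser of each summand over $\F_{t_{i-1}}$-measurable constants.

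I would then use this optimality by inserting the competitor $\tilde\nu_i=\eta(t_{i-1})\,\partial_xG(I_{t_{i-1}},\sigma_{t_{i-1}}^2)$. Introducing the clock $a_t=1-\sigma_t^2=\langle I\rangle_t$, so that $I_t=B_{a_t}$ for a standard Brownian motion $B$, and $\psi_r:=\partial_xG(B_r,1-r)$, each summand splits (up to a factor $2$) into a time-changed European increment $\int_{s_{i-1}}^{s_i}\E|\psi_r-\psi_{s_{i-1}}|^2\,dr$, with $s_i:=a_{t_i}=1-(1-i/n)^\beta$, and a correction $\E[\psi_{s_{i-1}}^2]\int_{t_{i-1}}^{t_i}(\eta(s)-\eta(t_{i-1}))^2\,ds$ coming from the deterministic variation of $\eta$. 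As $\psi$ is again a martingale with $d\psi_r=\partial_x^2G(B_r,1-r)\,dB_r$, the first part equals $\int_{s_{i-1}}^{s_i}(s_i-u)\,h(u)\,du$ with $h(u):=\E[(\partial_x^2G(B_u,1-u))^2]$, and moreover $\E[\psi_r^2]=\psi_0^2+\int_0^r h$.

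The smoothness of $g$ enters through the characterization $\|g\|_{\Besov{2}{\theta}{2}(\gamma)}^2\asymp\|g\|_{\Ltwo(\gamma)}^2+\int_0^1(1-u)^{1-\theta}h(u)\,du=:K$ (cf.\ \cite{GeissHujo}), which also yields $\int_0^r h\le K(1-r)^{\theta-1}$ and hence $\E[\psi_r^2]\lesssim(1-r)^{\theta-1}$. What remains is deterministic. For the European part I pull the weight out, $\int_{s_{i-1}}^{s_i}(s_i-u)h\le\bigl[\sup_u(s_i-u)(1-u)^{\theta-1}\bigr]\int_{s_{i-1}}^{s_i}(1-u)^{1-\theta}h$, and, using $1-s_i=(1-i/n)^\beta$, verify $\sup_u(s_i-u)(1-u)^{\theta-1}\le\beta\,n^{-\beta\theta}$ for $i\le n-1$; summing the remaining weighted integrals gives at most $K$. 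For the correction part I combine $\E[\psi_{s_{i-1}}^2]\lesssim(1-s_{i-1})^{\theta-1}$ with $\int_{t_{i-1}}^{t_i}(\eta-\eta(t_{i-1}))^2\,ds\lesssim\eta'(t_{i-1})^2\,n^{-3}$, which after reindexing produces $\lesssim n^{-\beta\theta}\sum_{j\ge1}j^{\beta\theta-3}<\infty$. Multiplying by $K\asymp\|g\|_{\Besov{2}{\theta}{2}}^2$ then gives the squared error $\lesssim n^{-\beta\theta}$, i.e.\ the rate $n^{-\beta\theta/2}$.

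The genuine obstacle is the terminal interval $[t_{n-1},T]$, equivalently $[s_{n-1},1]$, whose length in the $B$-clock is $n^{-\beta}$ rather than $n^{-1}$ and on which both $\eta$ and the derivatives of $G$ are singular, so the weight $(1-u)^{\theta-1}$ cannot be pulled out as above. There I would instead write $s_n-u=1-u$ and use $\sup_{[s_{n-1},1]}(1-u)^\theta=n^{-\beta\theta}$ to absorb the singularity into the convergent tail $\int_{s_{n-1}}^1(1-u)^{1-\theta}h\,du\le K$, and handle the correction directly through $\int_{t_{n-1}}^T(\eta-\eta(t_{n-1}))^2\,ds\lesssim\sigma_{t_{n-1}}^2=n^{-\beta}$ against $\E[\psi_{s_{n-1}}^2]\lesssim n^{\beta(1-\theta)}$. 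It is this balance at $t=T$ between the order $\beta$ of the singularity and the fractional order $\theta$ of $g$ that produces the exponent $\beta\theta/2$.
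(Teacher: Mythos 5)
Your proposal is correct, and it reaches the estimate by a genuinely different computational route, even though its skeleton secretly matches the paper's. Your split of \((\pDF)_s-\tilde\nu_i\) into \(\eta(s)(\psi_{a_s}-\psi_{s_{i-1}})\) plus \((\eta(s)-\eta(t_{i-1}))\psi_{s_{i-1}}\) is, up to the factor \(2\) your competitor argument introduces, the same two-term decomposition as Lemma \ref{lemma:oneStep}: since \(\condEof{(\pDF)_t}{\F_{t_{i-1}}}=\eta(t)\,\psi_{s_{i-1}}\), your ``correction'' term is (a comparable upper bound for) the paper's \(A_{a,b}\), and your time-changed European increment is exactly \(B_{a,b}\). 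The difference is in the engine. The paper evaluates both terms explicitly through the Hermite coefficients \(c_n\): for \(B_{a,b}\) it performs the clock change \(t\mapsto 1-(1-t)^\beta\) and cites Lemma 3.8 of \cite{GeissHujo} under condition (\ref{eqn:BesovConditionIntegral}); for \(A_{a,b}\) it uses \(n^{1-\theta}\bigl[1-(1-a)^\beta\bigr]^{n-1}\le(1-a)^{\beta(\theta-1)}\) via (\ref{eqn:BesovConditionSum}), an exact evaluation of the double integral of \((\eta(u)-\eta(v))^2\), a mean-value bound on interior intervals, a separate exact computation on the terminal one, and summation against \(\zeta(2-\beta)\). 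You never touch the coefficients: you identify the integrand through the heat extension \(G\), reduce to a European problem in the clock \(a_t=\langle I\rangle_t\) with weight \(h(u)=\E\bigl[(\partial_x^2G(B_u,1-u))^2\bigr]\), and run everything off the single inequality \(\int_0^1(1-u)^{1-\theta}h(u)\,du<\infty\) --- which is precisely condition (\ref{eqn:BesovConditionIntegral}) rewritten by Fubini, since \(h(u)=\sum_{n\ge2}c_n^2\,n(n-1)u^{n-2}\); in particular, your sup-weight estimate \(\sup_u(s_i-u)(1-u)^{\theta-1}\le\beta n^{-\beta\theta}\) reproves the needed case of the GeissHujo lemma rather than quoting it. Your bookkeeping checks out: \(s_i-s_{i-1}\le\beta n^{-\beta}(n-i)^{\beta-1}\) gives \((s_i-s_{i-1})(1-s_i)^{\theta-1}\le\beta n^{-\beta\theta}(n-i)^{\beta\theta-1}\le\beta n^{-\beta\theta}\); the correction sum reindexes to \(n^{-\beta\theta}\sum_{j\ge1}j^{\beta\theta-3}<\infty\); and the terminal interval balances \(n^{\beta(1-\theta)}\cdot n^{-\beta}=n^{-\beta\theta}\), matching the paper's separate treatment of \(b=1\). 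What the paper's route buys is exact constants (e.g.\ \(\bigl(\tfrac{1-\beta}{1+\beta}\bigr)^2\) and \(\beta(\beta-1)\zeta(2-\beta)\)) and direct reuse of the existing European machinery; what yours buys is a self-contained, probabilistically transparent argument that makes the time-change reduction and the singular terminal interval explicit. The only point to make airtight is the identification \((\pDF)_t=\eta(t)\,\partial_xG(I_t,\sigma_t^2)\) for merely \(g\in\Ltwo(\gamma)\), where \(g\) need not be differentiable: It\^o's formula applies on \([0,T-\epsilon]\) because \(G(\cdot,v)\) is smooth for \(v>0\), and the passage \(\epsilon\to0\) uses exactly the closure of the operator \({}^pD\) established in the paper's appendix --- routine, but worth a sentence.
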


The proof is based on the following observation: 
\begin{lemma} \label{lemma:oneStep}
  For \(0\leq a < b \leq T\), let 
  \[
  \Delta(a,b):= 
  \E(F | {\mathcal F}_b)- \E(F | {\mathcal F}_a) 
  - \frac {W_b-W_a} {b-a}  \int_a^b  \E\bigl( 
  (\pDF)_s \big\vert {\mathcal F}_a \bigr ) ds .
\]
Then
\begin{align*}
& \E\bigl( \Delta(a,b)^2 \bigr) 
\\&=
\E\biggl( \frac 1 {2 (b-a)}  \int_a^b \int_a^b \bigl\{  \E\bigl( 
(\pDF)_t  \big\vert {\mathcal F}_a\bigr) -  \E\bigl(
 (\pDF)_s  \big\vert {\mathcal F}_a\bigr) \bigr\}^2 ds\;dt \biggr) 
\\
& \phantom{=} +  \E\biggl( \int_a^b \bigl\{  (\pDF)_t  - \E\bigl(
(\pDF)_t \big\vert {\mathcal F}_a\bigr)\bigr\}^2 dt \biggr) .
\end{align*}
\end{lemma}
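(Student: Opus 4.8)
The plan is to collapse $\Delta(a,b)$ into a single It\^o integral and then apply the It\^o isometry, after which the stated formula is an elementary variance decomposition. Throughout I treat $(\pDF)$ as a predictable process lying in $L_2([a,b]\times\Omega)$, which is exactly the regularity furnished by the extension of the predictable projection from $\Donetwo$ to $\Ltwo$ recalled in Section~\ref{sec:Appendix}; this is what legitimises every stochastic integral below. Writing $\mu_a := \tfrac{1}{b-a}\int_a^b \E((\pDF)_s\,|\,\F_a)\,ds$, I note that $\mu_a$ is $\F_a$-measurable and hence, viewed as constant in time on $(a,b]$, a valid predictable integrand. Since $W_b - W_a = \int_a^b dW_s$, I would combine the two integrals in the definition of $\Delta(a,b)$ and use the representation $\E(F|\F_b)-\E(F|\F_a)=\int_a^b(\pDF)_s\,dW_s$ to obtain
\[
  \Delta(a,b) = \int_a^b \bigl( (\pDF)_s - \mu_a \bigr)\,dW_s.
\]

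The It\^o isometry then gives $\E(\Delta(a,b)^2) = \E\int_a^b ((\pDF)_s-\mu_a)^2\,ds$. Setting $m_s := \E((\pDF)_s|\F_a)$, I would split the integrand as
\[
  (\pDF)_s - \mu_a = \bigl( (\pDF)_s - m_s \bigr) + \bigl( m_s - \mu_a \bigr)
\]
and expand the square into three pieces. The cross term vanishes after conditioning on $\F_a$: the factor $m_s-\mu_a$ is $\F_a$-measurable, while $\E((\pDF)_s-m_s\,|\,\F_a)=0$, so the tower property annihilates it. The pure fluctuation term $\E\int_a^b((\pDF)_s-m_s)^2\,ds$ is already the second summand claimed in the lemma.

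It remains to rewrite $\E\int_a^b(m_s-\mu_a)^2\,ds$. For this I would use the elementary identity
\[
  \int_a^b (m_s-\mu_a)^2\,ds = \frac{1}{2(b-a)}\int_a^b\!\int_a^b (m_t-m_s)^2\,ds\,dt,
\]
valid for any square-integrable $m$ on $[a,b]$ whose average over $[a,b]$ is $\mu_a$; it follows from a one-line expansion of $(m_t-m_s)^2$ together with Fubini. Taking expectations and recalling $m_s=\E((\pDF)_s|\F_a)$ produces the first summand, completing the proof. The argument involves no genuine obstacle; the only point demanding care is the measurability-and-integrability bookkeeping that permits the It\^o isometry and treats $\mu_a$ as an admissible integrand, and this rests entirely on the properties of the extended predictable projection established in the appendix.
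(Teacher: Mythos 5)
Your proof is correct and follows essentially the same route as the paper: both arguments collapse \(\Delta(a,b)\) into the single It\^o integral \(\int_a^b \bigl((\pDF)_t - \mu_a\bigr)\,dW_t\) and then apply the It\^o isometry. The only difference is presentational: the paper obtains the final formula by expanding a triple integral after the isometry, whereas you organize the same algebra as an orthogonal decomposition (cross term killed by the tower property, since \(m_s-\mu_a\) is \(\F_a\)-measurable) plus the elementary symmetric identity \(\int_a^b (m_s-\mu_a)^2\,ds = \frac{1}{2(b-a)}\int_a^b\int_a^b (m_t-m_s)^2\,ds\,dt\); the two computations are identical in substance.
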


\section{Proofs}
\begin{proof}[Proof of Lemma \ref{lemma:oneStep}]
By a stochastic Fubini theorem,
\begin{align*} 
  \Delta(a,b) 
  &=
  \int_a^b (\pDF)_t dW_t - \frac {W_b-W_a} {b-a} 
  \int_a^b \E\bigl( (\pDF)_s\big\vert{\mathcal F}_a \bigr ) ds  
  \\ &=
  \frac 1 {b-a}
  \int_a^b \biggl(
  \int_a^b \biggl\{  
  (\pDF)_t
  - \E\bigl( (\pDF)_s \big\vert {\mathcal F}_a\bigr) 
  \biggr\}  dW_t \biggr)  ds  
  \\ &= 
  \frac 1 {b-a}
  \int_a^b \biggl( 
  \int_a^b \biggl\{  (\pDF)_t - \E\bigl
  (   (\pDF)_s \big\vert{\mathcal F}_a \bigr) \biggr\} ds \biggr) dW_t  .
\end{align*}

Therefore,
\begin{align*} 
  & \E\bigl( \Delta(a,b)^2 \bigr) 
 \\ &= 
  \frac 1 {(b-a)^2}
  \int_a^b \E \biggl( 
  \int_a^b \biggl\{  (\pDF)_t - \E\bigl
    (   (\pDF)_s \big\vert{\mathcal F}_a \bigr) \biggr\} ds \biggr)^2 dt
  \\ &= 
  \frac 1 {(b-a)^2 }
  \int_a^b \int_a^b \int_a^b
  \E\biggl( 
  \bigl\{ (\pDF)_t
  - \E\bigl(  (\pDF)_z\big\vert {\mathcal F}_a\bigr )
  \bigr\} \times 
  \\ &\phantom{= +}
    \bigl\{
    (\pDF)_t -  \E\bigl( (\pDF)_y \big\vert {\mathcal F}_a\bigr)
    \bigr\} \biggr) dy dz dt
  \\ &= 
  \E\biggl( \frac 1 {2 (b-a)}  \int_a^b \int_a^b \bigl\{  \E\bigl( 
  (\pDF)_t
  \big\vert {\mathcal F}_a\bigr) -  \E\bigl(
  (\pDF)_s  \big\vert {\mathcal F}_a\bigr) \bigr\}^2 ds\;dt \biggr)
  \\ &\phantom{=,}
  + \E\biggl( \int_a^b \bigl\{  (\pDF)_t  - \E\bigl(
  (\pDF)_t
  \big\vert {\mathcal F}_a\bigr)\bigr\}^2 dt \biggr). \; \qedhere
\end{align*}
\end{proof}

\begin{proof}[Proof of Theorem \ref{thm:example}]
Since \(g \in \Ltwo(\gamma)\), we have
$g(x)= \sum\limits_{n=0}^{\infty} c_n \frac{ H_n(x) } {\sqrt{ n!}}$ 
with \(\sum\limits_{n=0}^{\infty}c_n^2 < \infty\).
For \(F\), we then have the chaos expansion
\begin{eqnarray*} 
 F = \sum_{n\ge 0 } I_n(f_n) = \sum_{n } c_n \frac{ 1 } {\sqrt{ n! } } H_n\bigl ( W(\eta) \bigr)
\end{eqnarray*}
with $f_n(t_1,\dots,t_n )= \frac{c_n }{\sqrt{n!}}  \eta(t_1)\eta(t_2) \dots \eta(t_n) $.
With this normalization of the Hermite polynomials, 
and since  $\parallel \eta \parallel_{\Ltwo([0,T])}=1$, we have
$H_n(W(\eta) )= I_n(\eta^{\otimes n } )$. From Lemma \ref{lemma:oneStep} we obtain
\begin{eqnarray*} &&
  \E\bigl( \Delta(a,b)^2 \bigr)
  = \sum_{n\ge 1} \frac {n c_n^2 } { 2 (b-a)} \biggl( \int_0^a \eta(t)^2dt \biggr)^{n-1}
  \int_a^b \int_a^b \bigl( \eta(t)-\eta(s) \bigr)^2 ds dt
  + \\ && 
  \sum_{n\ge 2} c_n^2
  \biggl\{  \biggl( \int_0^b \eta(t)^2 dt \biggr)^{n}
  -  \biggl( \int_0^a \eta(t)^2 dt \biggr)^{n} - n \biggl( \int_0^a \eta(t)^2 dt \biggr)^{n-1}
  \biggl( \int_a^b \eta(t)^2 dt \biggr) \biggr\} 
  \\&&=: A_{a,b} + B_{a,b} .
\end{eqnarray*}

Since
\begin{eqnarray*}
  \int_a^b \eta(t)^2 dt = \frac{\beta}{T^{\beta}} \int_a^b (T-t)^{\beta -1} dt 
  = \left( 1-\frac{a}{T} \right)^{\beta} - \left( 1-\frac{b}{T} \right)^{\beta},
\end{eqnarray*}
we see that
\begin{eqnarray*}
  B_{a,b} 
&=& \sum_{n\ge 2} c_n^2 \left\{
  \left[ 1 - \left(1 - \frac{b}{T} \right)^\beta \right]^{n} -
  \left[ 1 - \left(1 - \frac{a}{T} \right)^\beta \right]^{n} \right. \\ 
&& - n \left.
  \left[ \left(1 - \frac{a}{T} \right)^\beta - \left(1 - \frac{b}{T} \right)^\beta \right]
  \left[ 1 - \left(1 - \frac{a}{T} \right)^\beta \right]^{n-1}
  \right\} \\
&=& \sum_{n\ge 2} c_n^2 
  \int_{1 - \left(1 - \frac{a}{T} \right)^\beta}^{1 - \left(1 - \frac{b}{T} \right)^\beta }
  \int_{1 - \left(1 - \frac{a}{T} \right)^\beta} ^u n (n-1) s^{n-2} ds du .
\end{eqnarray*}
Notice that when \(\beta = 1\), i.e.\ when \(\eta\) is constant, which means that \(F=g(W_T)\), 
the term \(A_{a,b}\) is zero and 
\(
  B_{a,b} = \sum_{n\ge 2} c_n^2 
  \int_{a}^{b}
  \int_{a} ^u n (n-1) s^{n-2} ds du 
\). 
This case was treated in \cite{GeissGeiss} and \cite{GeissHujo}. %

For simplicity, let us suppose that \(T=1\). %
Assuming that 
\begin{equation} \label{eqn:BesovConditionIntegral}
  \sum_{n\ge 2} c_n^2 n (n-1)
  \int_0^1 (1-s)^{1-\theta}  s^{n-2} ds
  = 
  \sum_{n\ge 2} \frac{ n!  c_n^2 }{ (2-\theta)(3-\theta) \dots (n-\theta) } =C < \infty,
\end{equation}
and using \cite[Lemma 3.8]{GeissHujo} 
we achieve, for any time net \(\{t_i\}_{i=1}^m\),
\begin{eqnarray*}
\sum_{i=1}^m \int_{t_{i-1}}^{t_i} \int_{t_{i-1}}^{u} \sum_{n\ge 2} c_n^2 n (n-1) s^{n-2} ds du 
  \leq C_1 \sup_{1 \leq i \leq m} \frac{t_i - t_ {i-1}}{\left(1-t_{i-1}\right)^{1-\theta}}
\end{eqnarray*}
for some \(C_1>0\) depending only on \(C\) and \(\theta\).
Now we fix the time net \(0 = t_0^0 < t_1^0 < \ldots < t_{m-1}^0 < t_m^0 = 1\) with \(t_i^0 := \frac{i}{m}\).
Using \(t_i^{\beta} := 1 - (1 - t_i)^{\beta}\)
we see that
\[
  \sum_{i=1}^m B_{t_{i-1}^0, t_i^0} 
  \leq c_1 \sup_{1 \leq i \leq n} \frac{t_i^{\beta} - t_{i-1}^{\beta}}{\left(1-t_{i-1}^{\beta}\right)^{1-\theta}}
  \leq c_1 m^{-\beta \theta} .
\]

Notice that condition (\ref{eqn:BesovConditionIntegral}) is equivalent to
\(g \in \Besov{2}{\theta}{2}(\gamma)\) (\cite[Theorem 2.2 and proof of Theorem 3.2]{GeissHujo}; 
see also \cite[Theorem 3.1]{GeissToivola2} for another proof),
and that the result so far coincides with the case treated in \cite{GeissHujo}.

For \(A_{a,b}\), we use (\ref{eqn:BesovConditionIntegral}) again in an equivalent form:
\begin{equation} \label{eqn:BesovConditionSum}
  \sum_{n\ge 1} c_n^2 n^{\theta} < C_2 < \infty,
\end{equation}
Then
\begin{align*}
  A_{a,b} &= \sum_{n\ge 1} \frac {n c_n^2 } { 2 (b-a)} \biggl( \int_0^a \eta(t)^2dt \biggr)^{n-1}
  \int_a^b \int_a^b \bigl( \eta(t)-\eta(s) \bigr)^2 ds dt \\
  &= 
  \sum_{n\ge 1} c_n^2 n^{\theta} n^{1-\theta} \left[  1 - \left(1 - a \right)^\beta \right]^{n-1}
  \frac{\beta}{2} \frac{1}{b-a} \times 
  \\ & \phantom{=,}
  \int_{a}^{b} \int_{a}^b\biggl( (1-u)^{(\beta-1)/2} -(1-v)^{ (\beta-1)/2} \biggr)^2 du dv .
\end{align*}
By an elementary computation,
\[
  n^{1-\theta} \left[  1 - \left(1 - a \right)^\beta \right]^{n-1}
  \leq \left( 1 - a \right)^{\beta (\theta -1) }
\]
for all \(n\).

Furthermore,
\begin{align*} 
&  \frac{  \beta \;  } {  2(b-a) } 
  \int_{a}^{b} \int_{a}^b\biggl( (1-u)^{(\beta-1)/2} -(1-v)^{ (\beta-1)/2} \biggr)^2 du dv 
\\ &=
  \beta \int_a^b (1-u)^{\beta-1} du
  -  \frac{  \beta \;  } {  (b-a) } \int_a^b \int_a^b (1-u-v + uv)^{ (\beta-1)/2 } du dv 
\\ &=
  (1-a)^{\beta} - (1-b)^{\beta}
  - \frac {\beta } {(b-a) } \int_a^b \int_{(1-u)(1-a) }^{(1-u)(1-b ) } z^{(\beta-1)/2 } (1-u)^{-1} dv du
\\ &=
  (1-a)^{\beta} - (1-b)^{\beta}
  - \frac {\beta } {(b-a) } \int_a^b  \frac{ 2 }{\beta+1 } (1-u)^{ (\beta-1)/2 }  \times
  \\ & \quad \quad
    \bigl\{ (1-a)^{(\beta+1)/2 }  - (1-b)^{(\beta+1)/2 } \bigr\} du
\\ &=
  (1-a)^{\beta} - (1-b)^{\beta} 
  -\frac {\beta } {(b-a) }   \frac{ 4 }{(\beta +1)^2 }  \bigl\{ 
  (1-a)^{(\beta+1)/2 }  - (1-b)^{(\beta+1)/2 } \bigr\}^2  .
\end{align*}
When $0 \le a < b = 1 $ we obtain
\begin{eqnarray*}
 (1-a)^{\beta}  \biggl(\frac{1-\beta } { 1+ \beta} \biggr)^2 .
\end{eqnarray*}
Otherwise $0 \le a < b < 1$ and by the mid-value theorem
we obtain
\begin{eqnarray*}
 \beta (b-a) ( \xi^{\beta-1} -\eta^{\beta-1} ) \le \beta (\beta-1)  (1-b)^{\beta-2} (b-a)^2
\end{eqnarray*}
for some values $\xi,\eta \in (1-b,1-a)$.

By taking an equally spaced time grid $(t_k = k /N : \;k=0,\dots,N)$, 
we have that the sum of the mean square errors over the interval $[0,1]$
 is bounded by
\begin{eqnarray*} &&
  N^{-\beta}   \biggl(\frac{1-\beta } { 1+ \beta} \biggr)^2    + \beta (\beta-1) N^{-2} \sum_{k=1}^{N-1} 
  \biggl( \frac k N \biggr)^{\beta-2} \\ &&
  \le  N^{-\beta} \biggl\{   \biggl(\frac{1-\beta } { 1+ \beta} \biggr)^2 + \beta (\beta-1) \zeta(2-\beta) \biggr\} ,
\end{eqnarray*}
where $\zeta(t)= \sum\limits_{k=1}^{\infty} k^{-t}\;$ is Riemann's zeta function.

Thus for $\beta < 1$,
\[
  \sum_{i=1}^m A_{t_{i-1},t_i} \leq C_2 m^{-\beta(\theta -1)} m^{-\beta}
\biggl\{   \biggl(\frac{1-\beta } { 1+ \beta} \biggr)^2 + \beta (\beta-1) \zeta(2-\beta) \biggr\}  = C_3 m^{-\beta \theta} . \qedhere
\]
\end{proof}

\section{Appendix} \label{sec:Appendix}

Here  $\FF=\FF^W$ is the augmented
Brownian filtration and ${\mathcal P}$ is the $\FF$-predictable $\sigma$-algebra
on \((\Omega \times [0,T])\)
generated by the left continuous $\FF$-adapted processes
(for the notation and concepts in this section, see \cite{RevuzYor}).
Note that in the Brownian filtration, the $\FF$-predictable and $\FF$-optional $\sigma$-algebrae coincide.
This is not so in general for the filtration generated by a L\'evy process.

Let 
\begin{align*}
 \Ltwop(\Omega \times [0,T]):=\Ltwo( \Omega \times [0,T], {\mathcal P}, dP \times dt)
\end{align*}
the subspace of  $\FF$-predictable integrands $Y_t(\omega)$ with
\begin{align*}
  E\biggl(   \int_0^T Y_t^2 dt \biggr) < \infty.
\end{align*}
Note that $\Ltwop( \Omega \times [0,T]) $ is  a closed subspace of 
$\Ltwo(\Omega \times [0,T], {\mathcal F}_T^W  \otimes {\mathcal B}([0,T]), dP \times dt)$.

For any process 
$X_t(\omega) \in \Ltwo(\Omega\times [0,T], {\mathcal F}^W_T \otimes {\mathcal B}([0,T]), dP \times dt)$,
there exists  the  predictable projection 
${}^p X  \in  \Ltwop(\Omega\times [0,T])$
such that for any $\FF$-predictable stopping time $\tau(\omega)$ 
(notice that in the Brownian filtration all stopping times are predictable),
\begin{align*}
  E ( X_{\tau} | {\mathcal F}_{\tau-} )(\omega){\bf 1}( \tau(\omega) <  \infty) 
  ={}^p X_{\tau(\omega)}(\omega) {\bf 1}( \tau(\omega) <  \infty),
\end{align*}
where
\begin{align*}
  {\mathcal F}_{\tau-}= \sigma\{  A \cap \{t<\tau \}:   t \ge 0,   A \in {\mathcal F}_t  \}.
\end{align*}
By the It\^o representation theorem,
if $F(\omega)\in \Ltwo(\Omega, {\mathcal F}^W_T, P)$, then there is an unique $Y_t(\omega) \in \Ltwop( \Omega \times [0,T])$
such that
\begin{align*}
  F= E(F) + \int_0^T Y_t dW(t) ,
\end{align*}
where the stochastic integral is an It\^o integral, and by the It\^o isometry 
\begin{align*}
  E(  F^2) = E(F)^2 + E\biggl(  \int_0^T Y_t^2 dt \biggr) .
\end{align*}

We show that %
$Y_t = ( {}^p D F)_t$,
where 
\[
  {}^p D :\Ltwo( \Omega) \to   \Ltwop( \Omega \times [0,T])%
\]
is the closure of the  operator defined for $F\in D^{1,2}$ such that $F  \mapsto  {}^p ( DF )$,
and $\mbox{Dom}( {}^p D ) =\Ltwo( \Omega,{\mathcal F}^W_T, P)$.

\begin{lemma} For $F\in D^{1,2}$, define  ${}^p D F = {}^p( DF)$.  
In other words, for any $\F$-predictable stopping time \(\tau\),
\begin{align*}
   {}^p D_{\tau} F(\omega){\bf 1}( \tau(\omega) <  \infty) 
   = E( D_{\tau} F | {\mathcal F}_{\tau-} ) (\omega){\bf 1}( \tau(\omega) <  \infty) .
\end{align*}

In particular, if $\tau(\omega) \equiv t$ is deterministic,
\begin{align*}
   {}^p D_{t} F(\omega) = E( D_{t} F | {\mathcal F}_{t-} ) (\omega)=E( D_{t} F | {\mathcal F}_{t} ) (\omega) .
\end{align*}

The operator $\;{}^p D$ is closable.
\end{lemma}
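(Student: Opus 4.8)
The plan is to treat the three assertions in turn. The first two are a matter of unwinding the definition of the predictable projection together with the left-continuity of the Brownian filtration, while the genuine content of the lemma is the closability, which I would establish in the sharper form of boundedness.

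First, for $F \in \Donetwo$ the Malliavin derivative $(D_t F)_{t\in[0,T]}$ admits a jointly measurable version that is $\F_T^W \otimes \mathcal{B}([0,T])$-measurable and satisfies $\E(\int_0^T (D_t F)^2\,dt)<\infty$; hence it belongs to $\Ltwo(\Omega\times[0,T],\F_T^W\otimes\mathcal{B}([0,T]),dP\times dt)$ and its predictable projection ${}^p(DF)$ is a well-defined element of $\Ltwop(\Omega\times[0,T])$. With the convention ${}^p D F := {}^p(DF)$, the displayed identity for a predictable stopping time $\tau$ is nothing but the defining property of the predictable projection recalled above, applied to the process $X = DF$; this is the ``in other words'' reformulation. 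For the deterministic specialization I would take the constant (hence predictable) stopping time $\tau\equiv t$, so that $\F_{\tau-}=\F_{t-}$ and the general identity reads ${}^p D_t F = \E(D_t F\mid \F_{t-})$; the remaining equality $\E(D_t F\mid \F_{t-})=\E(D_t F\mid \F_t)$ is then the classical left-continuity $\F_{t-}=\F_t$ of the augmented Brownian filtration.

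The heart of the matter is closability, and the plan is to prove the stronger statement that $F\mapsto {}^p D F$ is bounded on $\Donetwo$, which yields closability and at the same time identifies the domain of the closure; this route deliberately avoids the representation $Y_t=({}^p DF)_t$ toward which the appendix is working, so as not to argue in a circle. For any bounded predictable $H$ one has $H\in\mathrm{Dom}(\delta)$ with $\delta(H)=\int_0^T H_t\,dW_t$, so the Malliavin duality between $D$ and $\delta$ gives, for $F\in\Donetwo$,
\[
  \E\biggl( \int_0^T H_t\, {}^p D_t F\, dt \biggr)
  = \E\biggl( \int_0^T H_t\, D_t F\, dt \biggr)
  = \E\biggl( F \int_0^T H_t\, dW_t \biggr),
\]
the first equality holding because $H$ is predictable, so that testing against $H$ does not distinguish $DF$ from its predictable projection. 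Since ${}^p D F$ is predictable, its $\Ltwo(\Omega\times[0,T])$-norm equals the supremum of the left-hand side over predictable $H$ with $\|H\|_{\Ltwo(\Omega\times[0,T])}\le 1$; by Cauchy--Schwarz and the It\^o isometry the right-hand side is at most $\|F\|_{\Ltwo(\Omega)}\,\|H\|_{\Ltwo(\Omega\times[0,T])}$, so that
\[
  \normL {}^p D F \normR_{\Ltwo(\Omega\times[0,T])} \le \normL F \normR_{\Ltwo(\Omega)} .
\]
As $\Donetwo$ is dense in $\Ltwo(\Omega,\F_T^W,P)$, the bounded operator $F\mapsto {}^p DF$ extends uniquely to a bounded operator on all of $\Ltwo(\Omega,\F_T^W,P)$, and this extension is its closure; in particular the operator is closable with the stated domain.

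The step I expect to require the most care is the duality identity in the third paragraph: it rests on the two standard facts that a bounded predictable process lies in the domain of the Skorohod integral with $\delta(H)$ equal to the It\^o integral, and that the predictable projection is characterized in the $\Ltwo$-pairing by $\E(\int_0^T H_t X_t\,dt)=\E(\int_0^T H_t\,{}^p X_t\,dt)$ for every predictable $H$. These are exactly the points on which the boundedness hinges. The only further technical nuisance, the joint measurability and square-integrability of $DF$ used to define ${}^p(DF)$, is immediate from $F\in\Donetwo$.
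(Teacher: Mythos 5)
Your proposal is correct, and on the closability claim --- the only assertion the paper's proof actually addresses, the first two being, as you say, definitional unwindings plus the left-continuity \(\mathcal{F}_{t-}=\mathcal{F}_t\) of the augmented Brownian filtration --- you take a genuinely different route. The paper simply invokes the It\^o--Clark--Ocone formula for \(F_n\in \Donetwo\),
\[
F_n-\E (F_n)=\int_0^T {}^p(DF_n)_t\,dW_t=\int_0^T \E(D_tF_n\,|\,\mathcal{F}_t)\,dW_t,
\]
and the It\^o isometry, which gives the exact identity \(\|{}^pDF\|_{\Ltwo(\Omega\times[0,T])}=\|F-\E (F)\|_{\Ltwo(\Omega)}\) on \(\Donetwo\); closability is then immediate, and the same isometry is reused right after the lemma to identify the closure with \(F\mapsto Y\), the It\^o integrand. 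You instead derive the bound \(\|{}^pDF\|_{\Ltwo(\Omega\times[0,T])}\le\|F\|_{\Ltwo(\Omega)}\) by pairing against bounded predictable \(H\), combining the \(\Ltwo\)-characterization of the predictable projection (which holds because \(H_t\) is \(\mathcal{F}_{t-}\)-measurable for each fixed \(t\), plus Fubini) with the duality \(\E\int_0^T H_t D_tF\,dt=\E(F\,\delta(H))\) and the fact that \(\delta(H)\) is the It\^o integral for predictable \(H\). This is essentially the standard proof of Clark--Ocone inlined rather than cited: polarizing your duality identity against the It\^o representation \(F-\E(F)=\int_0^T Y_t\,dW_t\) would at once give \({}^pDF=Y\) on \(\Donetwo\), so your route delivers, with little extra work, the representation the appendix establishes afterwards. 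What the paper's argument buys is brevity and the sharp isometry; note that your constant can be sharpened to \(\|F-\E(F)\|_{\Ltwo(\Omega)}\) by observing \(\E\,\delta(H)=0\), recovering the isometry up to centering. What yours buys is self-containedness: no appeal to Clark--Ocone, which, as you rightly anticipate, keeps the logic manifestly non-circular --- though the paper's use of Clark--Ocone for \(F_n\in\Donetwo\) is in fact not circular either, since it is a standard result for \(\Donetwo\) random variables and the extension to \(\Ltwo\) is constructed only afterwards.
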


\begin{proof}
Let $F_n \in D^{1,2}$ with $F_n \stackrel{ \Ltwo(\Omega) }{\longrightarrow} 0$
and ${}^p D F_{n}  \stackrel{ \Ltwo(\Omega\times [0,T]) }{\longrightarrow} Y $.
Then necessarily $Y=0$.%

This is immediate from the It\^o isometry and It\^o-Clarck-Ocone formula 
for $F_n\in D^{1,2}$:
\begin{align*}
  F_n- E(F_n) = \int_0^T   {}^p ( D F_n )_t  dW_t 
  = \int_0^T   E(  D_t F_n | {\mathcal F}_t) dW_t  ,
\end{align*}
and by the It\^o isometry, 
\begin{align*}
  E\bigl( \bigl \{F_n- E(F_n) \bigr\}^2 \bigr) 
  &= 
  E\biggl( \biggl\{\int_0^T   E(  D_t F_n | {\mathcal F}_t) dW_t\biggr\}^2 \biggr) \\
  &= 
  \int_0^T  E\bigl( \bigl\{ {}^p ( D F_n )_t \bigr\}^2 \bigr) dt 
\end{align*}
    so that 
    \({}^p ( D F_n ) \to 0\) in $\Ltwo(\Omega \times  [0,T])$ as $F_n \to 0$ in $\Ltwo(\Omega)$.
\end{proof}

Let now $F\in \Ltwo(\Omega,{\mathcal F}^W_T,P)$ with 
It\^o representation
\begin{align*}
 F= E( F ) + \int_0^T Y_t dW_t ,
\end{align*}
where $Y \in \Ltwop( \Omega \times [0,T])$, and let $F_n \in D^{1,2}$ with $F_n \stackrel{\Ltwo(\Omega) }{\to} F$.
Then by It\^o isometry and It\^o-Clarck-Ocone formula necessarily 
\[
  {}^p D F_n :={}^p (DF_n)   \stackrel{\Ltwo(\Omega\times [0,T] ) }{\longrightarrow} Y,
\]
which means that the extension ${}^p D F = Y$ is well defined.

\bigskip

This gives a practical procedure to compute the martingale representation of $F\in \Ltwo(\Omega)$ : write formally $DF$ 
using the chaos expansion or the differentiation rules of Malliavin calculus. In general $DF$ does not need to
be a random function, it may live in a space of random distributions. Nevertheless when  we 
compute
the predictable projection of the formal chaos expansion of $DF$, 
by evaluating $E( D_t F |{\mathcal F}_t)$, we always obtain
a nice integrand in $\Ltwop( \Omega \times [0,T])$.

\end{document}